\documentclass[11pt]{amsart}

\usepackage[T1]{fontenc}
\usepackage[utf8]{inputenc}
\usepackage{lmodern}
\usepackage[margin=1in]{geometry}
\usepackage{microtype}
\usepackage{amsmath,amssymb,mathtools}
\usepackage{enumitem}
\usepackage{booktabs}
\usepackage[dvipsnames]{xcolor}
\usepackage[colorlinks=true,linkcolor=MidnightBlue,citecolor=ForestGreen,urlcolor=MidnightBlue]{hyperref}
\usepackage{tikz}
\usetikzlibrary{arrows.meta}

\newtheorem{theorem}{Theorem}[section]

\newtheorem{corollary}[theorem]{Corollary}
\newtheorem{lemma}[theorem]{Lemma}
\theoremstyle{definition}

\theoremstyle{remark}

\newcommand{\Nzero}{\mathbb N_0}
\newcommand{\Z}{\mathbb Z}
\newcommand{\A}{\mathcal A}

\setlist[enumerate]{itemsep=2pt,topsep=4pt}
\allowdisplaybreaks

\title[The Multiplicative Persistence Conjecture]{The Multiplicative Persistence Conjecture: Resolving the \(2\)-Adic Obstruction for Nonzero Even Targets}

\author[P. Nyadjo Fonga]{Patrick Nyadjo Fonga}
\address{Department of Mathematics and Statistics,
Bowling Green State University,
Bowling Green, Ohio 43403, USA}

\email{nyadjop@bgsu.edu}

\begin{document}

\begin{abstract}
The multiplicative persistence problem studies the process of repeatedly
replacing a positive integer by the product of its digits until a
single digit, called the terminal digit, is reached. The classical conjecture
asserts that no decimal integer requires more than \(11\) iterations. Brier,
Clavier, Gutsche, and Naccache proved the conjecture for all odd terminal
digits. In
their approach to nonzero even terminal digits, they were led to infinite families of
decimal integers in which the numbers of digits \(2,\ldots,9\) are fixed,
while arbitrarily many digits \(1\) may be inserted. They conjectured that,
despite the infinitude of such a family, the exponent of \(2\) dividing its
elements is uniformly bounded. We prove this conjecture and obtain an explicit
bound depending only on the prescribed digit multiplicities. More generally,
our argument applies in every base \(b\geq3\) and to every prime \(p\mid b\).
In base \(10\), combining our bound with the method of Brier, Clavier, Gutsche, and Naccache yields a finite algorithm for a further analysis of each nonzero even terminal digit.
\medskip

\smallskip
\noindent\textbf{2020 Mathematics Subject Classification:}
Primary 11A63; Secondary 11A07, 11Y16.
\end{abstract}

\maketitle

\section{Introduction}

Throughout the present paper, if \(p\) is a prime and \(m\) is a nonzero integer, we
denote by \(v_p(m)\) the \(p\)-adic valuation of \(m\). We also set $v_p(0)=+\infty$ and write
$\Nzero=\{0,1,2,\ldots\}$. Let $b\ge2$ and let
\[
 n=\sum_{j=0}^{L-1} d_j b^j,
 \qquad d_j\in\{0,1,\ldots,b-1\},
 \qquad d_{L-1}\ne0,
\]
be the base-$b$ expansion of a positive integer $n$. We define the \emph{digit product} of \(n\) as
\[
 S_b(n):=\prod_{j=0}^{L-1} d_j.
\]
Starting from $n_0=n$, define $n_{k+1}=S_b(n_k)$. Whenever $n_k\ge b$, we have that
$S_b(n_k)<n_k$, so the sequence \((n_k)\) is decreasing and therefore eventually reaches a one-digit number in base \(b\). Therefore, we define the \emph{multiplicative persistence} or simply \emph{persistence} of $n$ in base $b$ as
\[
 P_b(n):=\min\{r\ge0:S_b^r(n)<b\}
\]
and we call
\[
 \Delta_b(n):=S_b^{P_b(n)}(n)\in\{0,1,\ldots,b-1\}
\]
the \emph{target digit} or \emph{terminal digit} or  simply \emph{terminal} of the integer \(n\) in base \(b\). 

Sloane introduced the notion of multiplicative persistence in 1973 and asked whether, for
each fixed base, the persistence is uniformly bounded \cite{Sloane1973}. In
base 10, this leads to the following well-known conjecture:

\noindent\textbf{Conjecture A.} For all \(n\in\mathbb N\), we have \(P_{10}(n)\le 11\).

The smallest known integer with multiplicative persistence \(11\) is
\(
277777788888899,
\)
and no example of larger persistence is currently known; see
\cite{PerezStyer2015}.

Perez and Styer performed interesting
computations in bases $2$ through $12$ and in the particular case of base 10,  they verified the bound $11$ given by Conjecture A
for all integers below $10^{1500}$ \cite{PerezStyer2015}. Further background is
available in \cite{BonuccelliColucciFaria2020, deFariaTresser2014, Guy2004,
LamontSmith2021}.

A useful way to study the problem is to work backward from the terminal digit.
Brier, Clavier, Gutsche, and Naccache \cite{BrierEtAl2021} developed this
approach in base 10. They showed that Conjecture A holds for odd
terminal digits: indeed, they proved that terminal digits $1,3,7,$ and $9$ lead to persistence at most
$1$, while terminal digit $5$ leads to persistence at most $5$. However, their approach to the even terminal digits \(2,4,6,\) and \(8\) leads
to exponential Diophantine equations whose analysis gives rise to the following conjecture
\cite[Conjecture~2]{BrierEtAl2021};

\medskip
\noindent\textbf{Conjecture B.}
Fix nonnegative integers \(\nu_2,\ldots,\nu_9\). Consider the family of
positive decimal integers containing exactly \(\nu_d\) occurrences of the
digit \(d\), for each \(d=2,\ldots,9\), an arbitrary number of occurrences
of the digit \(1\), and no occurrence of the digit \(0\). Then there exists
a constant
\(
C
\)
such that
\(
v_2(n)\le C
\)
for every integer \(n\) in this family.
\medskip

In this paper, we provide a proof for Conjecture~B; see Corollary~\ref{cor:brier-conjecture}.
In fact, we obtain a more general result that applies to arbitrary bases; see Theorem~\ref{thm:general-bound}

The paper is organized as follows. In Section~\ref{sec:digit-families}, we
establish an explicit uniform bound in every base \(b\ge 3\) for every prime
\(p\mid b\); see Theorem~\ref{thm:general-bound}. Since the bound found in Theorem~\ref{thm:general-bound} is not optimal, Section~\ref{sec:sharp} develops a finite recursive
procedure, given in Theorem~\ref{thm:exact-search}, for computing the exact
maximal valuation associated with a prescribed multiset of digits. Finally,
Section~\ref{sec:even-program} explains how these results apply to the
even-terminal-digit approach to multiplicative persistence.

\section{Preliminary results}\label{sec:digit-families}

We begin by giving three elementary properties of the digit-product map defined in the previous section.
Let \(b\geq 3\) and \(n\in\mathbb{N}\).

\begin{itemize}
    \item If the base-\(b\) expansion of \(n\) contains a zero digit, then
    \(
    S_b(n)=0.
    \)

    \item Inserting or removing digits equal to \(1\) in the base-\(b\) expansion of \(n\) does not change the value
    of \(S_b(n)\).

    \item Reordering the digits in the base-\(b\) expansion of \(n\) does not
    change the value of \(S_b(n)\).
\end{itemize}

Consequently, once a multiset of nonzero digits has been
fixed, the only way to obtain infinitely many integers with the same first
digit product is by inserting arbitrarily many digits \(1\) in arbitrary
positions. This motivates the following definition.

Let
\(
\nu=(\nu_2,\nu_3,\ldots,\nu_{b-1})\in\Nzero^{b-2}.
\)
We denote by \(\A_b(\nu)\) the family of positive integers whose base-\(b\)
expansions contain exactly \(\nu_d\) occurrences of the digit \(d\), for each
\(2\le d\le b-1\), an arbitrary number of occurrences of the digit \(1\), and
no occurrence of the digit \(0\). The digits \(2,\ldots,b-1\) will be referred
to as the \emph{exceptional digits} of the family. We write
\[
q=q(\nu):=\sum_{d=2}^{b-1}\nu_d
\]
for the total number of exceptional digits in every element of
\(\A_b(\nu)\).

\begin{lemma}\label{eq:repunit-correction}
Let \(x\in\A_b(\nu)\) have \(L\) digits in base \(b\). Denote its exceptional
digits by \(d_1,\ldots,d_q\), and let \(r_i\) be the position occupied by
\(d_i\), where
\(
r_i\in\{0,1,\ldots,L-1\},
\)
and \(r_1,\ldots,r_q\) are pairwise distinct. Then
\begin{equation*}
x=\frac{b^L-1}{b-1}
  +\sum_{i=1}^{q}(d_i-1)b^{r_i}.
\end{equation*}
\end{lemma}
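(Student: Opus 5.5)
We would prove the identity directly from the base-\(b\) expansion, by comparing \(x\) with the repunit of the same length. Write \(x=\sum_{j=0}^{L-1} c_j b^j\) with every \(c_j\in\{1,\ldots,b-1\}\), since \(x\in\A_b(\nu)\) has no zero digit. Split each digit as \(c_j = 1 + (c_j-1)\). Then
\[
x=\sum_{j=0}^{L-1} b^j+\sum_{j=0}^{L-1}(c_j-1)b^j .
\]

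For the first sum we use the geometric series identity \(\sum_{j=0}^{L-1}b^j=\frac{b^L-1}{b-1}\). This is valid because \(b\ge 3>1\).

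For the second sum, observe that \(c_j-1=0\) exactly when \(c_j=1\). So only positions carrying an exceptional digit contribute. By hypothesis these are precisely the positions \(r_1,\ldots,r_q\), which are pairwise distinct, and \(c_{r_i}=d_i\). Every other position carries the digit \(1\), since the digit \(0\) is excluded and the exceptional digits are exactly \(d_1,\ldots,d_q\). Hence the second sum reduces to \(\sum_{i=1}^q (d_i-1)b^{r_i}\), and combining the two pieces gives the formula in the statement.

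There is no real obstacle here. The only point requiring care is the bookkeeping: we must confirm that the positions \(r_i\) exhaust exactly the non-\(1\) digits, so that no term is counted twice or omitted. This follows from the distinctness of the \(r_i\) and from the definition of \(\A_b(\nu)\).
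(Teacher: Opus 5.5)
Your proposal is correct and takes essentially the same approach as the paper. The paper starts from the repunit \(\frac{b^L-1}{b-1}\) and adds \((d_i-1)b^{r_i}\) for each exceptional position; your split \(c_j=1+(c_j-1)\) is the same computation written digit by digit.
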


\begin{proof}
The base-\(b\) integer consisting of \(L\) digits, all equal to \(1\), is
\[
1+b+b^2+\cdots+b^{L-1}=\frac{b^L-1}{b-1}.
\]
Here position \(0\) is the units position, position \(1\) is the coefficient
of \(b\), and, more generally, position \(r\) is the coefficient of \(b^r\).
Replacing the digit \(1\) in position \(r_i\) by \(d_i\) increases the value
of the integer by \((d_i-1)b^{r_i}\). Performing this replacement at every
exceptional position therefore gives
\[
x=\frac{b^L-1}{b-1}
  +\sum_{i=1}^{q}(d_i-1)b^{r_i},
\]
as claimed.
\end{proof}

Lemma~\ref{eq:repunit-correction} shows that, once the multiplicities
\(\nu_d\) are fixed, the exceptional digits themselves are fixed as a
multiset. Thus, the only quantities that vary among the elements of
\(\A_b(\nu)\) are the number \(L\) of base-\(b\) digits and the pairwise
distinct positions \(r_1,\ldots,r_q\). Lemma 
\ref{eq:repunit-correction} therefore gives a convenient representation of the entire family
\(\A_b(\nu)\).

The family \(\A_{10}(\nu)\) is exactly the family that appears in the study of
multiplicative persistence. Indeed, suppose in base 10 that
\[
S_{10}^2(x)=s
\]
and that one representation of \(s\) as a product of decimal digits is
\[
s=\prod_{d=2}^9 d^{\nu_d}.
\]
Then the intermediate value \(S_{10}(x)\) must have exactly \(\nu_d\)
occurrences of the digit \(d\), for \(2\le d\le 9\), together with an
arbitrary number of digits \(1\). Hence
\(
S_{10}(x)\in\A_{10}(\nu).
\)

At the same time, since \(S_{10}(x)\) is itself a product of decimal digits,
its prime divisors can only be \(2,3,5,\) and \(7\). Thus
\[
S_{10}(x)=2^t3^u5^v7^w,
\qquad t,u,v,w\in\Nzero.
\]
In particular,
\(
t=v_2\bigl(S_{10}(x)\bigr).
\)
Therefore, proving that \(v_2(y)\) is uniformly bounded for
\(y\in\A_{10}(\nu)\) is precisely what is needed to show that the exponent
\(t\) cannot grow arbitrarily large when the exceptional digits are fixed.
This is the content of Conjecture~B.

We now provide in the following lemma the elementary $p$-adic mechanism behind all our bounds.

\begin{lemma}\label{lem:valuation-control}
Let \(p\) be a prime and let \(b\geq2\) satisfy \(p\mid b\). Set
\(e:=v_p(b)\).
\begin{enumerate}[label=\textup{(\roman*)},leftmargin=1.5em]
\item If \(S,c\in\Z\setminus\{0\}\) and \(a\in\Nzero\) satisfy
\[
ea+v_p(c)>v_p(S),
\]
then
\[
v_p(S+cb^a)=v_p(S).
\]

\item Suppose that \(a,c,B,T\) are positive integers and
\[
T+cb^a<Bb^a.
\]
Then either
\begin{equation}\label{eq:one-step-bound-1}
v_p(T+cb^a)=v_p(T),
\end{equation}
or
\begin{equation}\label{eq:one-step-bound}
v_p(T+cb^a)
<
\log_p B+\frac{\log_p b}{e}\,v_p(T).
\end{equation}
\end{enumerate}
\end{lemma}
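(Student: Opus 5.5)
For part (i), I would use the ultrametric property of \(v_p\). Since \(v_p(b)=e\), we have
\[
v_p(cb^a)=ea+v_p(c)>v_p(S).
\]
When two nonzero integers have distinct valuations, the valuation of their sum is the smaller of the two. Hence \(v_p(S+cb^a)=v_p(S)\). The sum is automatically nonzero, because otherwise \(S=-cb^a\) would force the two valuations to agree. This step is routine.

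For part (ii), I would argue by contrapositive. Assume \eqref{eq:one-step-bound-1} fails, so \(v_p(T+cb^a)\neq v_p(T)\). Here \(T\) and \(c\) are positive, so \(S=T\) and \(c\) are nonzero, and part (i) applies. Its contrapositive gives
\[
ea+v_p(c)\le v_p(T),
\]
and therefore \(ea\le v_p(T)\), that is, \(a\le v_p(T)/e\).

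Next I would bound the valuation of \(N:=T+cb^a\) by its size. Since \(N\) is a positive integer, \(p^{v_p(N)}\le N\), so \(v_p(N)\le \log_p N\). The hypothesis gives \(N<Bb^a\), which is strict. Taking \(\log_p\) yields
\[
v_p(N)<\log_p B+a\log_p b\le \log_p B+\frac{\log_p b}{e}\,v_p(T).
\]
This is exactly \eqref{eq:one-step-bound}.

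I expect no serious obstacle. The only points needing care are that \(T+cb^a\) is positive, so that \(v_p\) is finite and the log bound applies, and that the strict inequality comes from the hypothesis \(T+cb^a<Bb^a\).
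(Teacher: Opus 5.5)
Your proposal is correct and matches the paper's proof: part (i) uses the ultrametric property, and part (ii) reduces to $ea\le v_p(T)$ (you via the contrapositive of (i), the paper via a case split on $ea$ versus $v_p(T)$), then bounds $p^{v_p(T+cb^a)}\le T+cb^a<Bb^a\le Bb^{v_p(T)/e}$. Your remark that $S+cb^a\neq 0$ is a small detail the paper leaves implicit.
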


\begin{proof}
For \textup{(i)}, we have
\[
v_p(cb^a)=v_p(c)+ea>v_p(S).
\]
Since the two summands have distinct \(p\)-adic valuations,
\[
v_p(S+cb^a)
=\min\{v_p(S),v_p(cb^a)\}
=v_p(S).
\]

For \textup{(ii)}, if \(ea>v_p(T)\), then
\[
ea+v_p(c)>v_p(T),
\]
so part \textup{(i)} gives \eqref{eq:one-step-bound-1}. Otherwise,
\(ea\leq v_p(T)\), and hence
\[
a\leq\frac{v_p(T)}{e}.
\]
Therefore,
\[
p^{v_p(T+cb^a)}
\leq T+cb^a
<Bb^a
\leq Bb^{v_p(T)/e}.
\]
Taking logarithms to base \(p\) gives \eqref{eq:one-step-bound}.
\end{proof}

Lemma~\ref{lem:valuation-control} states that once the position \(a\) is large
enough that \(ea\) exceeds the current \(p\)-adic valuation, placing a new
digit at position \(a\) cannot change that valuation. When \(ea\) does not
exceed the current valuation, the new digit may affect it, but part~(ii)
provides an explicit upper bound for the resulting valuation.

\section{Uniform bounds for \(p\)-adic valuations}

Fix $b\ge3$. In this section, we show that once the number of occurrences of each digit
\(2,\ldots,b-1\) is fixed, there is a uniform limit on the power of any prime
dividing \(b\) that can divide the resulting integers. We also provide an
explicit formula for this limit. 

\begin{theorem}\label{thm:general-bound}
Let \(b\ge3\), let \(p\mid b\) be prime, and fix
\(
\nu=(\nu_2,\ldots,\nu_{b-1})\in\Nzero^{b-2}.
\)
Then
\[
\sup_{x\in\A_b(\nu)}v_p(x)<\infty.
\]
Moreover, if we set  \[B(\nu):=1+(b-1)\sum_{d=2}^{b-1}(d-1)\nu_d, \quad e:=v_p(b), \ \  \ \ \ \lambda:=\frac{\log_p b}{e}\]  and \[ \rho_p(\nu):=
\max\left(
\{0\}\cup
\left\{
v_p\bigl((b-1)(d-1)-1\bigr):
\begin{array}{l}
2\leq d\leq b-1, \ \
\nu_d>0,\quad p\mid d
\end{array}
\right\}
\right),\] then;\\
if none of the exceptional digits is divisible by \(p\), then
\[
v_p(x)=0
\qquad\text{for every }x\in\A_b(\nu).
\]
Otherwise, with \(q=\sum_{d=2}^{b-1}\nu_d\), every \(x\in\A_b(\nu)\) satisfies, when
\(\lambda>1\),
\begin{equation}\label{eq:general-bound}
\begin{split}
v_p(x)\le
\Biggl\lfloor
\rho_p(\nu)\lambda^q +\log_p B(\nu)\,
\frac{\lambda^q-1}{\lambda-1}
\Biggr\rfloor,
\end{split}
\end{equation}
while, when \(\lambda=1\),
\begin{equation}\label{eq:general-bound-pure-power}
v_p(x)\le
\left\lfloor
\rho_p(\nu)+q\log_p B(\nu)
\right\rfloor.
\end{equation}
\end{theorem}

\begin{proof}
Fix \(x\in\A_b(\nu)\).
Suppose first that none of the exceptional digits of \(x\) is divisible by
\(p\). Every digit of \(x\), including its units digit, is either
\(1\) or one of the exceptional digits. Since neither \(1\) nor
any exceptional digit is divisible by \(p\), the units digit is
not divisible by \(p\). As \(p\mid b\), it follows that \(p\nmid x\), and
therefore
\(
v_p(x)=0.
\)

We now assume that at least one prescribed exceptional digit of \(x\) is divisible by
\(p\). If the units digit of \(x\) is not divisible
by \(p\), then again \(v_p(x)=0\), so there is nothing to prove. We may
therefore restrict our attention to the case \(p\mid x\). In this case the
units digit must be an exceptional digit \(t\) satisfying \(p\mid t\).

Suppose that \(x\) has \(L\) digits in base
\(b\) with its \(q\) exceptional digits being
\(
d_1,\ldots,d_q,
\)
and that these digits occur at the pairwise distinct positions
\(
r_1,\ldots,r_q\in\{0,1,\ldots,L-1\},
\)
so that \(d_i\) is the digit occurring in position \(r_i\), with position \(0\)
corresponding to the units digit. We have from Lemma \ref{eq:repunit-correction} that
\begin{equation}\label{eq:scaled-expansion}
(b-1)x
=
-1+b^L+
\sum_{i=1}^{q}(b-1)(d_i-1)b^{r_i}.
\end{equation}
Apart from the initial term \(-1\), the right-hand side of \eqref{eq:scaled-expansion} contains exactly
\(q+1\) positive terms. We now arrange these \(q+1\) positive terms according to increasing powers of
\(b\). Since the positions \(r_1,\ldots,r_q\) are pairwise distinct and all
belong to \(\{0,\ldots,L-1\}\), while the additional exponent is \(L\), then we may write their exponents as
\[
0=a_1<a_2<\cdots<a_q<a_{q+1}=L,
\]
so that
\begin{equation}\label{eq:ordered-expansion}
(b-1)x=-1+\sum_{j=1}^{q+1}c_jb^{a_j},
\end{equation}
with
\[
\sum_{j=1}^{q+1}c_j
=
1+(b-1)\sum_{i=1}^{q}(d_i-1).
\]
Since the digit \(d\) occurs exactly \(\nu_d\) times among
\(d_1,\ldots,d_q\), we have
\[
\sum_{i=1}^{q}(d_i-1)
=
\sum_{d=2}^{b-1}(d-1)\nu_d.
\]
Therefore
\begin{equation}\label{eq:coefficient-sum}
\sum_{j=1}^{q+1}c_j
=
1+(b-1)\sum_{d=2}^{b-1}(d-1)\nu_d
=
B(\nu).
\end{equation}
For
\(1\le j\le q+1\), define
\[
T_j:=-1+\sum_{i=1}^{j}c_i b^{a_i},
\qquad
r_j:=v_p(T_j).
\]
Notice that
\[
T_1=-1+c_1=-1+(b-1)(t-1)>0.
\]

Because the units digit \(t\) is divisible by \(p\), the definition of
\(\rho_p(\nu)\) gives
\begin{equation} \label{eq1}
r_1
=
v_p\bigl((b-1)(t-1)-1\bigr)
\le \rho_p(\nu).
\end{equation}

Since \(p^e\mid b\), we have \(b\ge p^e\), and hence
\(
\lambda\ge1.
\)
We define a sequence of integers \(R_0,\ldots,R_q\) recursively by
\begin{equation*}\label{eq:recursive-R}
R_0:=\rho_p(\nu),
\quad
R_{k+1}:=
\left\lfloor
\lambda R_k+\log_p B(\nu)
\right\rfloor,
\quad 0\le k<q.
\end{equation*}
We shall prove that
\begin{equation}\label{eq:induction-claim}
r_j\le R_{j-1},
\qquad 1\le j\le q+1.
\end{equation}

For \(j=1\), this follows immediately from \eqref{eq1}.
Now let \(2\le j\le q+1\), and suppose inductively that
\[
r_{j-1}\le R_{j-2}.
\]
Since
\(
T_j=T_{j-1}+c_jb^{a_j},
\)
the effect of the new term \(c_jb^{a_j}\) on the valuation of the partial
sum is determined by the size of \(ea_j\) relative to \(r_{j-1}\). We
distinguish two cases.

\smallskip
\noindent\emph{Case 1: \(ea_j>r_{j-1}\).}
Since \(c_j\) is a positive integer,
\[
v_p(c_jb^{a_j})
=
v_p(c_j)+ea_j
\ge ea_j
>
r_{j-1}
=
v_p(T_{j-1}).
\]
The two summands \(T_{j-1}\) and \(c_jb^{a_j}\) therefore have distinct
\(p\)-adic valuations. By Lemma~\ref{lem:valuation-control}(i),
\[
v_p(T_j)
=
v_p(T_{j-1}),
\]
and hence
\(
r_j=r_{j-1}\le R_{j-2}.
\)
Also, the sequence \((R_k)\) is nondecreasing. Indeed, since
\(\lambda\ge1\) and \(B(\nu)>1\),
\[
\lambda R_k+\log_p B(\nu)>R_k,
\]
so
\[
R_{k+1}\ge R_k.
\]
Consequently,
\[
r_j\le R_{j-2}\le R_{j-1}.
\]

\smallskip
\noindent\emph{Case 2: \(ea_j\le r_{j-1}\).}
In this case,
\[
a_j\le\frac{r_{j-1}}{e}.
\]
Since the exponents satisfy \(a_i\le a_j\) for \(1\le i\le j\), we have
\(
b^{a_i}\le b^{a_j}.
\)
Therefore
\[
T_j \le b^{a_j}\sum_{i=1}^{q+1}c_i=B(\nu)b^{a_j}.
\]
Since \(a_j\le r_{j-1}/e\), we obtain
\[
0<T_j
<
B(\nu)b^{r_{j-1}/e}.
\]
Now \(p^{r_j}\mid T_j\), so
\(
p^{r_j}\le T_j.
\)
It follows that
\[
p^{r_j}
<
B(\nu)b^{r_{j-1}/e}.
\]
Taking logarithms to base \(p\) gives
\[
r_j
<
\log_p B(\nu)
+\frac{\log_p b}{e}\,r_{j-1}.
\]
By the definition of \(\lambda\),
\[
r_j
<
\log_p B(\nu)+\lambda r_{j-1}.
\]
Using the induction hypothesis \(r_{j-1}\le R_{j-2}\), we get
\[
r_j
<
\log_p B(\nu)+\lambda R_{j-2}.
\]
Since \(r_j\) is an integer,
\[
r_j
\le
\left\lfloor
\lambda R_{j-2}+\log_p B(\nu)
\right\rfloor
=
R_{j-1}.
\]
Thus \eqref{eq:induction-claim} holds in both cases. By induction,
\[
r_j\le R_{j-1}
\qquad\text{for every }1\le j\le q+1.
\]

Taking \(j=q+1\), and recalling that \(T_{q+1}=(b-1)x\), we obtain
\[
v_p((b-1)x)=r_{q+1}\le R_q.
\]
Because \(p\mid b\), we have \(p\nmid b-1\), and therefore
\[
v_p(b-1)=0.
\]
Hence
\[
v_p(x)
=
v_p((b-1)x)
\le R_q.
\]

It remains to obtain a closed expression for \(R_q\). From
\eqref{eq:recursive-R},
\[
R_{k+1}
=
\left\lfloor
\lambda R_k+\log_p B(\nu)
\right\rfloor
\le
\lambda R_k+\log_p B(\nu).
\]
Iterating this inequality gives
\[
\begin{aligned}
R_q
&\le
\lambda^qR_0+
\log_p B(\nu)
\left(1+\lambda+\cdots+\lambda^{q-1}\right)\\
&=
\rho_p(\nu)\lambda^q
+
\log_p B(\nu)
\sum_{j=0}^{q-1}\lambda^j.
\end{aligned}
\]
If \(\lambda>1\), then
\[
\sum_{j=0}^{q-1}\lambda^j
=
\frac{\lambda^q-1}{\lambda-1},
\]
and therefore
\[
v_p(x)
\le
\left\lfloor
\rho_p(\nu)\lambda^q
+
\log_p B(\nu)
\frac{\lambda^q-1}{\lambda-1}
\right\rfloor,
\]
which is \eqref{eq:general-bound}.

If \(\lambda=1\), then
\[
\sum_{j=0}^{q-1}\lambda^j=q,
\]
and consequently
\[
v_p(x)
\le
\left\lfloor
\rho_p(\nu)+q\log_p B(\nu)
\right\rfloor,
\]
which is \eqref{eq:general-bound-pure-power}.
\end{proof}
We now specialize Theorem~\ref{thm:general-bound} to the decimal case
\(b=10\) with \(p=2\). In this setting, we write
\begin{equation} \label{eq3}
q=\sum_{d=2}^9 \nu_d, \ \
B=1+9\sum_{d=2}^9(d-1)\nu_d, \ \
c=\log_2 10.
\end{equation}

If the units digit is even, then it must be one of \(2,4,6,\) or \(8\). The
corresponding initial values appearing in the definition of \(\rho_p(\nu)\)
are

\[
\begin{array}{c@{\quad}c@{\quad}c}
\toprule
d & 9(d-1)-1 & v_2\bigl(9(d-1)-1\bigr)\\
\midrule
2 & 8  & 3\\
4 & 26 & 1\\
6 & 44 & 2\\
8 & 62 & 1\\
\bottomrule
\end{array}
\]
and therefore
\begin{equation}\label{eq 5}
\rho=
\begin{cases}
3, & \nu_2>0,\\
2, & \nu_2=0 \text{ and } \nu_6>0,\\
1, & \nu_2=\nu_6=0 \text{ and } \nu_4+\nu_8>0,\\
0, & \nu_2=\nu_4=\nu_6=\nu_8=0.
\end{cases}
\end{equation}

Applying Theorem~\ref{thm:general-bound} with these quantities gives the
following Corollary.
\begin{corollary}
\label{cor:brier-conjecture}
Fix $\nu = (\nu_2,\ldots,\nu_9)\in\Nzero^8$. We have
\[
 \sup_{x\in\A_{10}(\nu)}v_2(x)<\infty.
\]
More precisely, every $x\in\A_{10}(\nu)$ satisfies
\begin{equation}\label{eq:decimal-closed-bound}
 v_2(x)\le
 \left\lfloor
 \rho c^q+\frac{c^q-1}{c-1}\log_2 B
 \right\rfloor,
\end{equation}
where $q,B,c,$ and $\rho$ are given by \eqref{eq3} and \eqref{eq 5}.
\end{corollary}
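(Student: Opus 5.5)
The plan is to obtain Corollary~\ref{cor:brier-conjecture} by specializing Theorem~\ref{thm:general-bound} to \(b=10\) and \(p=2\). Nothing new has to be proved. We only identify the parameters and check which branch of the theorem applies.

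First we compute the invariants. We have \(e=v_2(10)=1\), so \(\lambda=\log_2 10/1=c\approx 3.32>1\). Hence the bound \eqref{eq:general-bound} is the relevant one, not \eqref{eq:general-bound-pure-power}. Next, \(B(\nu)=1+9\sum_{d=2}^9(d-1)\nu_d\), which is the quantity \(B\) of \eqref{eq3}, and \(q\) agrees with \eqref{eq3} as well.

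The second step is to identify \(\rho_2(\nu)\) with \(\rho\) in \eqref{eq 5}. The digits \(d\le 9\) divisible by \(2\) are \(2,4,6,8\). The table gives \(v_2(9(d-1)-1)=3,1,2,1\) for these digits, so the maximum over those \(d\) with \(\nu_d>0\), together with \(0\), is exactly the case distinction in \eqref{eq 5}. This is the step that needs the most care, but it is only a case check: \(\rho=3\) when \(\nu_2>0\); otherwise \(\rho=2\) when \(\nu_6>0\); otherwise \(\rho=1\) when \(\nu_4\) or \(\nu_8\) is positive; and \(\rho=0\) when there is no even exceptional digit.

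Finally we split according to whether an even exceptional digit is present. If there is none, Theorem~\ref{thm:general-bound} gives \(v_2(x)=0\). The right-hand side of \eqref{eq:decimal-closed-bound} is nonnegative, since \(\rho=0\), \(c>1\) and \(B\ge1\), so the bound still holds. If there is an even exceptional digit, inequality \eqref{eq:general-bound} with \(\lambda=c\), \(\rho_2(\nu)=\rho\) and \(B(\nu)=B\) is literally \eqref{eq:decimal-closed-bound}. In both cases \(v_2(x)\) is bounded by a quantity that depends only on \(\nu\), so the supremum over \(\A_{10}(\nu)\) is finite.
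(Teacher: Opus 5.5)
Your proposal is correct and takes the same route as the paper: the corollary is obtained by specializing Theorem~\ref{thm:general-bound} to $b=10$, $p=2$, where $e=1$ and $\lambda=\log_2 10=c>1$, and where $q$, $B(\nu)$ and $\rho_2(\nu)$ reduce to the quantities in \eqref{eq3} and \eqref{eq 5}. You also remark that the right-hand side of \eqref{eq:decimal-closed-bound} is nonnegative when no even exceptional digit is present, so the case $v_2(x)=0$ is covered; the paper leaves this detail implicit.
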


Corollary~\ref{cor:brier-conjecture} proves that the \(2\)-adic valuation is
uniformly bounded when the exceptional digits are fixed. However, the bound
obtained need not to be the smallest
possible one. In the next section, we show that the exact maximal valuation can
be computed by a finite procedure.

\section{The optimal bound for the \(p\)-adic valuation}\label{sec:sharp}

Fix an integer \(b\geq3\), a prime \(p\mid b\), and
\(\nu\in\Nzero^{b-2}\), and set
\(
e:=v_p(b).
\)
Let \(\mathcal D\) be the prescribed multiset of exceptional digits, containing
exactly \(\nu_d\) copies of \(d\) for each \(2\leq d\leq b-1\), and put
\[
q:=\sum_{d=2}^{b-1}\nu_d.
\]
Theorem~\ref{thm:general-bound} shows that the set
\(
\left\{v_p(x):x\in\A_b(\nu)\right\}
\)
is bounded and therefore has a maximum. Computing this maximum directly is
not immediate because \(\A_b(\nu)\) is infinite: arbitrarily many digits equal
to \(1\) may be inserted, so the length of \(x\) and the positions of its
exceptional digits are unbounded. We overcome this difficulty by following
the partial sum obtained as the exceptional digits are placed successively.

Let \(x\in\A_b(\nu)\) satisfying \(p\mid x\). Since \(p\mid b\), divisibility by
\(p\) is determined by the units digit. Thus, the units digit must be an
exceptional digit divisible by \(p\). List the exceptional digits of \(x\) as
\(d_1,\ldots,d_q\), in increasing order of their positions given by:
\[
0=r_1<r_2<\cdots<r_q<L.
\]
By Lemma~\ref{eq:repunit-correction},
\[
(b-1)x=-1+b^L+\sum_{i=1}^{q}(b-1)(d_i-1)b^{r_i}.
\]
Since \(p\nmid b-1\), we have
\[
v_p(x)=v_p\bigl((b-1)x\bigr).
\]

For \(1\leq m\leq q\), define
\[
T_m:=-1+\sum_{i=1}^{m}(b-1)(d_i-1)b^{r_i}.
\]
Thus, \(T_m\) is the partial sum obtained after the first \(m\) exceptional
digits have been placed, and
\[
(b-1)x=T_q+b^L.
\]
For \(1\leq m<q\), the next digit \(d_{m+1}\), placed at position
\(r_{m+1}>r_m\), gives
\[
T_{m+1}
=
T_m+(b-1)(d_{m+1}-1)b^{r_{m+1}}.
\]

The new term satisfies
\[
v_p\bigl((b-1)(d_{m+1}-1)b^{r_{m+1}}\bigr)
=
v_p(d_{m+1}-1)+er_{m+1}
\geq er_{m+1}.
\]
If \(er_{m+1}>v_p(T_m)\), Lemma~\ref{lem:valuation-control}(i) gives
\[
v_p(T_{m+1})=v_p(T_m).
\]
Moreover, all exceptional digits placed afterward, as well as the leading
term \(b^L\), occur at larger positions and therefore cannot change this
valuation. Consequently, after the first \(m\) digits have been placed, the
next position needs to be examined only if
\begin{equation}\label{eq:relevant-position}
r_{m+1}\leq
\left\lfloor\frac{v_p(T_m)}{e}\right\rfloor.
\end{equation}
This is the observation that makes the search finite.

We now express the same construction without fixing a particular integer
\(x\). A state of the search is a triple
\[
(\mathcal M,a,T),
\]
where \(\mathcal M\) is the multiset of exceptional digits not yet placed,
\(a\) is the largest occupied position, and \(T\) is the current partial sum.
For the integer considered above, the state after the first \(m\) exceptional
digits have been placed is
\[
\left(
\mathcal D\setminus\{d_1,\ldots,d_m\},
r_m,
T_m
\right),
\]
where one occurrence of each listed digit is removed.

Let
\[
\mathfrak V(\mathcal M,a,T)
\]
denote the largest final \(p\)-adic valuation obtainable from the state
\((\mathcal M,a,T)\). If \(\mathcal M\neq\varnothing\), place a digit
\(d\in\mathcal M\) at a position \(t>a\) and set
\[
T_{d,t}:=T+(b-1)(d-1)b^t.
\]
By \eqref{eq:relevant-position}, only positions satisfying
\[
a<t\leq\left\lfloor\frac{v_p(T)}{e}\right\rfloor
\]
can change the current valuation. Therefore,
\begin{equation}\label{eq:recursive-sharp}
\begin{aligned}
\mathfrak V(\mathcal M,a, T)
=\max\Biggl(
\{v_p(T)\}\cup \left\{
\mathfrak V\bigl(\mathcal M\setminus\{d\},t,T_{d,t}\bigr):
\begin{array}{l}
d\in\mathcal M, \ \
a<t\leq\left\lfloor v_p(T)/e\right\rfloor
\end{array}
\right\}
\Biggr).
\end{aligned}
\end{equation}
Here \(\mathcal M\setminus\{d\}\) means that one occurrence of \(d\) is
removed. The value \(v_p(T)\) represents the case in which the next digit is
placed beyond the relevant range. In that case, neither this digit nor any
subsequent term changes the valuation.

When \(\mathcal M=\varnothing\), all exceptional digits have been placed and
only the leading term \(b^L\) remains. Hence
\begin{equation}\label{eq:recursive-terminal}
\begin{aligned}
\mathfrak V(\varnothing,a,T)
=\max\Biggl(
\{v_p(T)\}\cup \left\{
v_p(T+b^L):
a<L\leq\left\lfloor\frac{v_p(T)}{e}\right\rfloor
\right\}
\Biggr).
\end{aligned}
\end{equation}
If \(eL>v_p(T)\), then \(b^L\) cannot change the valuation, which accounts for
the first value in the maximum.

The recursion is finite and exact. At each nonterminal state, one exceptional
digit is removed from \(\mathcal M\), and only finitely many positions satisfy
\eqref{eq:relevant-position}. Moreover, every completion of a state either
places the next digit within this finite range, in which case it appears in
the recursive part of \eqref{eq:recursive-sharp}, or places it beyond the
range, in which case the final valuation remains \(v_p(T)\).

It remains to identify the possible initial states. If \(p\nmid x\), then
\(v_p(x)=0\). If \(p\mid x\), its units digit is some
\(d\in\mathcal D\) satisfying \(p\mid d\). Placing \(d\) in position \(0\)
gives
\[
T_d:=(b-1)(d-1)-1
\]
and the initial state
\(
\bigl(\mathcal D\setminus\{d\},0,T_d\bigr).
\)
We have therefore established the following exact formula.

\begin{theorem}\label{thm:exact-search}
Let \(b\geq3\), let \(p\mid b\) be a prime, and let
\(
\nu=(\nu_2,\ldots,\nu_{b-1})\in\Nzero^{b-2}.
\)
Let \(\mathcal D\) be the multiset containing exactly
\(\nu_d\) copies of each digit \(d\in\{2,\ldots,b-1\}\), and, for
\(d\in\mathcal D\), define
\[
T_d:=(b-1)(d-1)-1.
\]
Then the recursion defining \(\mathfrak V\) terminates after finitely many
steps, and
\begin{equation}\label{eq:exact-max}
\begin{aligned}
\max_{x\in\A_b(\nu)}v_p(x)
=\max\Biggl(
\{0\}\cup \left\{
\mathfrak V\bigl(\mathcal D\setminus\{d\},0,T_d\bigr):
\begin{array}{l}
d\in\mathcal D, \ \
p\mid d
\end{array}
\right\}
\Biggr).
\end{aligned}
\end{equation}

\end{theorem}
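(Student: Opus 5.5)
We prove the two inequalities in \eqref{eq:exact-max} separately, after first checking that the recursion terminates. The precise object $\mathfrak V(\mathcal M,a,T)$ is defined as the maximum of $v_p(T')$ over all \emph{completions} of the state. A completion places the digits of $\mathcal M$ at positions $a<t_1<\dots<t_k<L$, where $k=|\mathcal M|$, and adds $b^L$, producing $T'=T+\sum(b-1)(d_i-1)b^{t_i}+b^L$. The claim is that \eqref{eq:recursive-sharp} and \eqref{eq:recursive-terminal} compute this quantity exactly.

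\emph{Termination.} We induct on $|\mathcal M|$. Each recursive call removes one digit. The branching set is $\{(d,t): d\in\mathcal M,\ a<t\le\lfloor v_p(T)/e\rfloor\}$, which is finite because $T\neq 0$ forces $v_p(T)<\infty$. We must check that every state reached has $T>0$. Initially $T_d=(b-1)(d-1)-1\ge b-2\ge 1$, and each step adds a positive term, so positivity persists. Hence the recursion tree is finitely branching of depth at most $q$, and it is finite.

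\emph{Correctness of each recursion step.} We induct on $|\mathcal M|$. Fix a completion and look at its first added term, which is $(b-1)(d-1)b^{t}$ with $d\in\mathcal M$, or $b^L$ if $\mathcal M=\varnothing$.
\begin{itemize}
\item If $et\le v_p(T)$, the completion is a completion of the state $(\mathcal M\setminus\{d\},t,T_{d,t})$, so its valuation is at most that state's $\mathfrak V$.
\item If $et>v_p(T)$, then every later term has valuation at least $e\cdot(\text{its position})>v_p(T)$. Summing these terms into one, Lemma~\ref{lem:valuation-control}(i) gives final valuation exactly $v_p(T)$.
\end{itemize}
This shows ``$\le$''. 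For ``$\ge$'', we must realize each candidate value. The value $v_p(T)$ is realized by placing the remaining digits at consecutive positions beyond $v_p(T)/e$, then adding $b^L$ further out. Each recursive value is realized because any completion of the child state is a completion of the parent. In the terminal case $\mathcal M=\varnothing$ the same argument applies to $L$, with the one constraint $L>a$.

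\emph{Initial states.} Every $x\in\A_b(\nu)$ with $p\mid x$ has its units digit $d\in\mathcal D$ with $p\mid d$, because $p\mid b$. Then $(b-1)x$ is a completion of $(\mathcal D\setminus\{d\},0,T_d)$, and $v_p(x)=v_p((b-1)x)$ since $p\nmid b-1$. Conversely, every completion of such an initial state equals $(b-1)x$ for an actual $x\in\A_b(\nu)$, by Lemma~\ref{eq:repunit-correction}. Integers $x$ with $p\nmid x$ contribute the value $0$. This gives \eqref{eq:exact-max}, and the maximum exists by Theorem~\ref{thm:general-bound}.

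The main point needing care is the ``$\ge$'' direction, specifically that the branch ``place beyond the range'' is genuinely attainable. We must confirm that pushing all remaining terms to large positions is legitimate, meaning positions stay distinct, the digit count works out, and $L$ exceeds all positions. Since $\A_b(\nu)$ allows arbitrarily many $1$'s, this is routine.
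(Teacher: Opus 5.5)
Your proof is correct and follows the same route as the paper. The valuation freezes once a term is placed at a position $t$ with $et>v_p(T)$ (Lemma~\ref{lem:valuation-control}(i)), so only finitely many positions need branching, and the initial states are identified via the units digit. You are more explicit than the paper in three places: you define $\mathfrak V$ as a maximum over completions, you prove the two inequalities separately, and you note that $T>0$ keeps $v_p(T)$ finite; the paper's argument uses these points implicitly.
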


We illustrate the procedure in base \(10\) with \(p=2\) and
\[
\mathcal D=\{4,4,7\}.
\]
Here \(e=v_2(10)=1\). An even integer in this family must have units digit
\(4\). Placing one copy of \(4\) in position \(0\) gives
\[
T_4=9(4-1)-1=26,
\qquad
v_2(T_4)=1,
\]
so the initial state is
\[
(\{4,7\},0,26).
\]

The next position must satisfy
\[
0<t\leq1,
\]
so \(t=1\) is the only possible position. There are, however, two possible
digits to place there. They give
\[
\begin{array}{c@{\qquad}c@{\qquad}c}
\toprule
\text{digit }d & 26+9(d-1)10 & \text{\(2\)-adic valuation}\\
\midrule
4 & 296 & 3\\
7 & 566 & 1\\
\bottomrule
\end{array}
\]
We therefore place the second digit \(4\) in position \(1\), producing the state
\[
(\{7\},1,296),
\qquad
v_2(296)=3.
\]

For the remaining digit \(7\), the relevant positions satisfy
\[
1<t\leq3,
\]
so both \(t=2\) and \(t=3\) must be examined:
\[
\begin{array}{c@{\qquad}c@{\qquad}c}
\toprule
t & 296+54\cdot10^t & \text{\(2\)-adic valuation}\\
\midrule
2 & 5696  & 6\\
3 & 54296 & 3\\
\bottomrule
\end{array}
\]
For \(t=2\), we obtain the terminal state
\[
(\varnothing,2,5696),
\qquad
v_2(5696)=6.
\]
The leading position must now satisfy \(L>2\), and only the values
\(L=3,4,5,6\) can change the current valuation. Checking all of them gives
\[
\begin{array}{c@{\qquad}c@{\qquad}c}
\toprule
L & 5696+10^L & \text{\(2\)-adic valuation}\\
\midrule
3 & 6696    & 3\\
4 & 15696   & 4\\
5 & 105696  & 5\\
6 & 1005696 & 7\\
\bottomrule
\end{array}
\]
Thus, the largest value
among all terminal choices is \(7\), attained when \(L=6\). Since \(9\) is
odd,
\[
v_2(x)=v_2(9x)=7.
\]
The corresponding integer is
\[
x=\frac{1005696}{9}=111744,
\]
and indeed
\[
111744=3^2\cdot97\cdot2^7.
\]

Applying the same finite procedure to the four exceptional-digit multisets
considered in \cite[Table~4]{BrierEtAl2021} gives the following exact values.
A maximizing integer is recovered by recording the choices made along a
maximizing branch of the recursion.

\begin{center}
\begin{tabular}{ccl}
\toprule
multiset & \(\max v_2\) & one maximizing integer\\
\midrule
\(\{2,2,2,2,7\}\) & \(13\) &
\(172122112=21011\cdot2^{13}\)\\
\(\{2,2,4,7\}\) & \(15\) &
\(211111411712=17\cdot378977\cdot2^{15}\)\\
\(\{4,4,7\}\) & \(7\) &
\(111744=3^2\cdot97\cdot2^7\)\\
\(\{2,7,8\}\) & \(9\) &
\(1178112=3\cdot13\cdot59\cdot2^9\)\\
\bottomrule
\end{tabular}
\end{center}

Thus, Corollary~\ref{cor:brier-conjecture} gives a convenient explicit bound
valid over the whole family, whereas Theorem~\ref{thm:exact-search} gives a
finite procedure for computing the optimal bound for each prescribed
multiset of exceptional digits.

\section{Consequences for nonzero even terminal digits}
\label{sec:even-program}

We now explain how our valuation bounds could contribute to the treatment of the
nonzero even terminal digits
\(
\delta\in\{2,4,6,8\}.
\)

Throughout this section, \(S=S_{10}\) denotes the decimal digit-product map.

For a fixed terminal digit \(\delta\), let
\[
A_\delta=(V_\delta,E_\delta)
\]
denote its full backward graph. More precisely, \(A_\delta\) is the smallest
directed graph constructed as follows. We begin with \(\delta\in V_\delta\).
Whenever \(s\in V_\delta\) and \(x\) is a positive integer satisfying
\(
S^2(x)=s,
\)
we add the intermediate value
\(
y:=S(x)
\)
to \(V_\delta\), together with the directed edge
\(
s\longrightarrow y.
\)
Thus, an arrow \(s\to y\) means that \(S(y)=s\) and that \(y\) itself occurs
as the digit product of another integer. The arrows point away from the
terminal digit and toward its possible predecessors.

If \(A_\delta\) was finite, then the length of its longest path would give a uniform
bound on the persistence of all such integers. Motivated by this observation,
Brier, Clavier, Gutsche, and Naccache constructed finite candidate graphs
\[
B_\delta=(U_\delta,F_\delta)
\]
for the nonzero even terminal digits \cite[
Appendix~C]{BrierEtAl2021}. Proving that \(B_\delta=A_\delta\) would show that
these graphs contain all possible predecessors and would therefore establish
the corresponding persistence bounds. Here \(U_\delta\) is the proposed finite set of
vertices and \(F_\delta\) is the proposed set of directed edges between them.
By construction,
\[
U_\delta\subseteq V_\delta,
\qquad
F_\delta\subseteq E_\delta.
\]
The remaining problem is to prove that no additional vertices are missing.
Equivalently, one must verify that
\begin{equation}\label{eq:backward-closure}
s\in U_\delta,\quad S^2(x)=s
\quad\Longrightarrow\quad
S(x)\in U_\delta.
\end{equation}
If \eqref{eq:backward-closure} holds for every \(s\in U_\delta\), then
\(
B_\delta=A_\delta.
\)

For example, the candidate finite graph for the terminal digit \(4\) proposed in \cite{BrierEtAl2021} has vertex set
\[
U_4=
\left\{
4,14,27,72,98,189,294,1161216,
2^{23}3^7\cdot7
\right\}.
\]
Its edges are displayed in Figure 1. The loop at \(4\)
records \(S(4)=4\), while, for example,
\[
S(14)=4,\qquad
S(72)=14,\quad \text{and} \quad
S(1161216)=72.
\]

\begin{figure}[h] \label{fig:B4-graph}
\centering
\begin{tikzpicture}[
    vertex/.style={
        draw,
        rounded corners=2pt,
        minimum height=7mm,
        inner xsep=7pt,
        font=\small
    },
    arrow/.style={
        -{Latex[length=2mm,width=1.4mm]},
        semithick
    }
]
\node[vertex] (v4) at (0,0) {\(4\)};
\node[vertex] (v14) at (2,0) {\(14\)};
\node[vertex] (v27) at (4,1) {\(27\)};
\node[vertex] (v72) at (4,-1) {\(72\)};
\node[vertex] (v98) at (6.4,1.2) {\(98\)};
\node[vertex] (v189) at (6.4,0.2) {\(189\)};
\node[vertex] (v294) at (6.4,-0.8) {\(294\)};
\node[vertex] (v1161216) at (6.4,-1.8) {\(1161216\)};
\node[vertex] (vlast) at (10,-1.8) {\(2^{23}3^7\cdot7\)};

\draw[arrow] (v4) edge[loop above,looseness=5] (v4);
\draw[arrow] (v4) -- (v14);
\draw[arrow] (v14) -- (v27);
\draw[arrow] (v14) -- (v72);
\draw[arrow] (v72) -- (v98);
\draw[arrow] (v72) -- (v189);
\draw[arrow] (v72) -- (v294);
\draw[arrow] (v72) -- (v1161216);
\draw[arrow] (v1161216) -- (vlast);
\end{tikzpicture}
\caption{The candidate backward graph \(B_4=(U_4,F_4)\).}
\end{figure}

We next describe how the completeness condition
\eqref{eq:backward-closure} is verified. Fix \(s\in U_\delta\), and suppose
that
\[
S^2(x)=s.
\]
Set
\[
y:=S(x).
\]
Then \(S(y)=s\). Hence the product of the decimal digits of \(y\) is \(s\).
Every possible multiset of non-\(1\) digits of \(y\) is therefore obtained
from a factorization
\[
s=\prod_{d=2}^{9}d^{n_d},
\qquad
n_d\in\Nzero.
\]
We denote the set of all such factorizations by
\[
\mathcal F(s):=
\left\{
(n_2,\ldots,n_9)\in\Nzero^8:
\prod_{d=2}^{9}d^{n_d}=s
\right\}.
\]
For each
\[
\mathbf n=(n_2,\ldots,n_9)\in\mathcal F(s),
\]
let \(\mathcal D(\mathbf n)\) be the multiset containing \(n_d\) copies of
the digit \(d\) with \( 2 \le d \le 9\). The decimal expansion of \(y\) then consists of the digits
in \(\mathcal D(\mathbf n)\), together with an arbitrary number of digits
equal to \(1\).

Write
\[
\mathcal D(\mathbf n)=\{d_1,\ldots,d_k\},
\qquad
k=\sum_{d=2}^{9}n_d.
\]
Let \(a_0\) be the number of digits of \(y\), and let \(a_i\) be the position
of the exceptional digit \(d_i\). Thus,
\begin{equation}\label{eq:even-position-conditions}
0\leq a_i<a_0
\quad (1\leq i\leq k),
\qquad
a_i\neq a_j
\quad (i\neq j).
\end{equation}
Lemma~\ref{eq:repunit-correction} gives
\[
9y=
10^{a_0}-1+
\sum_{i=1}^{k}9(d_i-1)10^{a_i}.
\]

Because \(y=S(x)\), every prime divisor of \(y\) belongs to
\(\{2,3,5,7\}\). Moreover, an orbit ending at one of
\(2,4,6,\) or \(8\) cannot pass through a value containing a factor \(5\):
such a value leads either to the terminal digit \(5\) or to \(0\).
Consequently, the relevant intermediate values have the form
\[
y=2^t3^u7^w,
\qquad
t,u,w\in\Nzero.
\]
Therefore, the possible predecessors associated with
\(\mathcal D\) are exactly the solutions of
\begin{equation}\label{eq:general-even-equation}
10^{a_0}-1+
\sum_{i=1}^{k}9(d_i-1)10^{a_i}
=
2^t3^{u+2}7^w
\end{equation}
satisfying the positional conditions
\eqref{eq:even-position-conditions}. The exponent \(t\) is precisely
\(v_2(y)\).

Theorem~\ref{thm:exact-search} computes
\[
t_{\max}(\mathcal D)
:=
\max_{y\in\A_{10}(\nu)}v_2(y).
\]
Hence every solution of \eqref{eq:general-even-equation} satisfies
\[
0\leq t\leq t_{\max}(\mathcal D).
\]

Using the modular method of
\cite{BrierEtAl2021} gives the following finite procedure:
\begin{enumerate}[label=\textup{(\arabic*)},leftmargin=*]
\item For each \(s\in U_\delta\), enumerate the finite set
\(\mathcal F(s)\) of digit factorizations of \(s\).

\item For each \(\mathbf n\in\mathcal F(s)\), form the corresponding multiset
\(\mathcal D(\mathbf n)\) and the equation
\eqref{eq:general-even-equation}.

\item Apply Theorem~\ref{thm:exact-search} to compute
\(t_{\max}(\mathcal D)\), and consider only
\(0\leq t\leq t_{\max}(\mathcal D)\).

\item For each fixed \(t\), apply the modular-lifting algorithm of
\cite{BrierEtAl2021}.

\item Discard every solution that violates
\eqref{eq:even-position-conditions}. Each surviving solution determines a
possible predecessor \(y\). Verify that every such \(y\) belongs to
\(U_\delta\).
\end{enumerate}
If the last verification succeeds for every \(s\in U_\delta\), then
\eqref{eq:backward-closure} holds and the candidate graph is complete.

The size of this computation was estimated in
\cite{BrierEtAl2021}:
\[
\begin{array}{c@{\qquad}c@{\qquad}c@{\qquad}c}
\toprule
\delta & \lvert U_\delta\rvert
& \text{equation families}
& \text{largest number of terms}\\
\midrule
2 & 33 & 1117 & 30\\
4 & 9  & 1062 & 32\\
6 & 84 & 6377 & 37\\
8 & 51 & 4774 & 45\\
\bottomrule
\end{array}
\]
The numbers in the third column count the digit factorizations before the
finite range of \(t\) is taken into account. Including all admissible values of
\(t\) therefore produces a larger, but still finite, collection of equations.
Their exhaustive analysis is well suited to exact high-performance
computational methods, and its completion would determine whether the proposed
backward graphs are complete. The purpose of the present work is to provide the
missing theoretical bound and to reduce this verification to a finite,
terminating procedure. Although carrying out the full computation lies beyond
the scope of this paper, the formulation developed here is intended to make
that next step accessible.


\begin{thebibliography}{99}

\bibitem{BonuccelliColucciFaria2020}
G. Bonuccelli, L. Colucci and E. de Faria,
\emph{On the Erd\H{o}s--Sloane and shifted Sloane persistence problems},
J. Integer Seq. \textbf{23} (2020), Article 20.10.7, 30 pp.

\bibitem{BrierEtAl2021}
\'E. Brier, C. Clavier, L. Gutsche and D. Naccache,
\emph{The multiplicative persistence conjecture is true for odd targets},
arXiv:2110.04263 [math.NT], 2021,
\url{https://arxiv.org/abs/2110.04263}.

\bibitem{deFariaTresser2014}
E. de Faria and C. Tresser,
\emph{On Sloane's persistence problem},
Exp. Math. \textbf{23} (2014), no.~4, 363--382.


\bibitem{Guy2004}
R. K. Guy,
\emph{Unsolved Problems in Number Theory},
3rd ed., Problem Books in Mathematics, Springer, New York, 2004.

\bibitem{LamontSmith2021}
T. Lamont-Smith,
\emph{Multiplicative persistence and absolute multiplicative persistence},
J. Integer Seq. \textbf{24} (2021), Article 21.6.7, 26 pp.


\bibitem{PerezStyer2015}
S. Perez and R. Styer,
\emph{Persistence: a digit problem},
Involve \textbf{8} (2015), no.~3, 439--446.

\bibitem{Sloane1973}
N. J. A. Sloane,
\emph{The persistence of a number},
J. Recreational Math. \textbf{6} (1973), 97--98.


\end{thebibliography}
\end{document}